\newtheorem{thm}{Theorem}[section]
\theoremstyle{definition}
\newtheorem{Def}[thm]{Definition}
\newtheorem{ex}[thm]{Example}
\newtheorem{rmk}[thm]{Remark}
\makeatletter \renewenvironment{proof}[1][\proofname] {\par\pushQED{\qed}\normalfont\topsep6\p@\@plus6\p@\relax\trivlist\item[\hskip\labelsep\bfseries#1\@addpunct{.}]\ignorespaces}{\popQED\endtrivlist\@endpefalse} \makeatother
\begin{document}
\title{Average chain transitivity and the almost average shadowing property}
\author{Mukta Garg \and Ruchi Das}
\newcommand{\acr}{\newline\indent}
\address{(M. Garg) Department of Mathematics, University of Delhi, Delhi-110007, India}
\email{mgarg@maths.du.ac.in, mukta.garg2003@gmail.com}

\address{(R. Das) Department of Mathematics, University of Delhi, Delhi-110007, India}
\email{rdasmsu@gmail.com}

\subjclass{54H20 (primary), 37B20 (secondary).}
\keywords{average chain transitive, average chain mixing, shadowing, almost average shadowing.}

\begin{abstract}

In this paper, we introduce and study notions of average chain transitivity, average chain mixing, total average chain transitivity and almost average shadowing property. We also discuss their interrelations.\end{abstract}
\pagestyle{plain}
\maketitle

\section{Introduction}
The basic goal of the theory of discrete dynamical systems is to describe the nature of all trajectories of the system. However, in certain instances, it is almost impossible to compute the exact initial value of $x$, which further leads to the approximate values of $f(x)$, $f^2(x)$ and so on. In this process, instead of the actual trajectory, we obtain an approximate trajectory of the system at $x$, which is named as pseudo trajectory (or pseudo-orbit). The idea of putting these pseudo trajectories close to the true trajectories of the system motivates the theory of shadowing property.

The shadowing property holds a significant portion of the theory of dynamical systems because of its close relation to the stability and to the chaoticity of the system. There are now various variants of this concept which exist in the literature, for instance, one can refer \cite{B, C, X, Gu, P} and some equivalences are obtained for expansive homeomorphisms having shadowing property on compact metric spaces \cite{K, L, Ks}.

One of the most significant properties of discrete dynamical systems is topological transitivity. In topological transitive systems, any point $x$ in the phase space can be reached to any other point $y$ of the space via true orbit of some point in any neighborhood of $x$. The notion of chain transitivity is a natural generalization of the notion of topological transitivity which connects any pair of points in the phase space by a pseudo-orbit with any desired error bound. There might be a situation that given an error bound $\delta$, we are not able to obtain a $\delta$-pseudo-orbit but it may be easier to find an $\eta$-average-pseudo-orbit with any given average error bound $\eta$.

This motivates us to introduce the notion of average chain transitivity, which is weaker than the notion of chain transitivity. We also introduce the notion of almost average shadowing property and study its relation with the average chain transitivity. The paper is organized as follows. In Section 2, we give notations and necessary definitions required for remaining sections. In Section 3, we introduce notions of average chain transitivity, average chain mixing, total average chain transitivity and study the relations among them. Section 4 is devoted to the introduction of the notion of almost average shadowing property (ALASP) and to the study of some of its properties. Here we also investigate the relation of the ALASP with chain transitivity, chain mixing and chain components.
\section{Preliminaries}

Throughout this paper $\mathbb{N}$ denotes the set of natural numbers and $\mathbb{Z}_+$ denotes the set of nonnegative integers. By a dynamical system, we mean a pair $(X,f)$, where $X$ is a metric space with metric $d$ and $f:X\rightarrow X$ is a continuous map. A sequence $\{x_i\}_{i\geqslant 0}$ in $X$ is called an orbit of $f$ if $x_{i+1}=f(x_i)$ for every $i\geqslant 0$ and for $\delta >0$, $\{x_i\}_{i\geqslant 0}$ is called a $\delta$-pseudo-orbit of $f$ if $d(f(x_i),x_{i+1})<\delta$ for every $i\geqslant 0$. The map $f$ is said to have the \textit{shadowing property} \cite{AH} if for every $\epsilon>0$, there is a $\delta>0$ such that every $\delta$-pseudo-orbit $\{x_i\}_{i\geqslant 0}$ is $\epsilon$-shadowed by some point $z\in X$, that is, $d(f^i(z),x_i)<\epsilon$ for every $i\geqslant 0$. For $\delta>0$, a sequence $\{x_i\}_{i\geqslant 0}$ in $X$ is called a $\delta$-average-pseudo-orbit of $f$ if there is an integer $N=N(\delta)>0$ such that for all $n\geqslant N$ and all $k\geqslant 0$,

\begin{center}
$\displaystyle\frac{1}{n}\sum\limits_{i=0}^{n-1}d(f(x_{i+k}),x_{i+k+1})<\delta$.
\end{center}

The map $f$ is said to have the \textit{average-shadowing property} (ASP) \cite{B} if for every $\epsilon>0$, there is a $\delta>0$ such that every $\delta$-average-pseudo-orbit $\{x_i\}_{i\geqslant 0}$ of $f$ is $\epsilon$-shadowed in average by some point $z\in X$, that is,

\begin{center}
$\limsup\limits_{n\rightarrow\infty}\displaystyle\frac{1}{n}\sum\limits_{i=0}^{n-1}d(f^i(z),x_i)<\epsilon$.
\end{center}

Recall that for $\delta>0$ and $x$, $y\in X$, a \textit{$\delta$-chain} of $f$ from $x$ to $y$ of \textit{length} $n\in\mathbb{N}$ is a finite sequence $x_0=x,x_1,\dots,x_n=y$ satisfying $d(f(x_i),x_{i+1})<\delta$ for $0\leqslant i\leqslant n-1$. Two points $x$, $y\in X$ are called \textit{chain equivalent} if for every $\delta>0$, there exist a $\delta$-chain of $f$ from $x$ to $y$ and a $\delta$-chain of $f$ from $y$ to $x$. A point $x\in X$ is called \textit{chain recurrent point} of $f$ if $x$ is chain equivalent to itself. We denote the set of all chain recurrent points of $f$ by $CR(f)$ and call it as \textit{chain recurrent set} of $f$. It is clear that the relation of being chain equivalent is an equivalence relation on $CR(f)$. An equivalence class under this equivalence relation is called a \textit{chain component} of $f$ \cite{AH}. The map $f$ is said to be \textit{chain transitive} if for any $\delta>0$ and any pair $x$, $y\in X$, there is a $\delta$-chain of $f$ from $x$ to $y$ and it is \textit{totally chain transitive} if each $f^k$, $k\in\mathbb{N}$, is chain transitive. The map $f$ is said to be \textit{chain mixing} if for any $\delta>0$ and any pair $x$, $y\in X$, there exists $N\in\mathbb{N}$ such that for any $n\geqslant N$, there is a $\delta$-chain of $f$ from $x$ to $y$ of length $n$ \cite{R}.

We also recall that the map $f$ is \textit{topologically transitive} \cite{AH} if for any pair of nonempty open sets $U$, $V\subseteq X$, there is an $n\in \mathbb{Z}_+$ such that $f^n(U)\cap V\ne\emptyset$ and it is \textit{topologically mixing} if for any pair of nonempty open sets $U$, $V\subseteq X$, there is an $N\in\mathbb{N}$ such that $f^n(U)\cap V\ne\emptyset$ for all $n\geqslant N$. It is well known that if $f$ has the shadowing property, then topological transitivity coincides with chain transitivity and topological mixing coincides with chain mixing. For any $A\subseteq\mathbb{Z}_+$, we define the \textit{upper density} of $A$ by

\begin{center}
$u_d(A)=\limsup\limits_{n\rightarrow\infty}\displaystyle\frac{1}{n}|A\cap\{0,1,\dots,n-1\}|$,
\end{center}
where $|\cdot|$ denotes the cardinality of the set.

A set $A\subseteq \mathbb{Z}_+$ is said to be \textit{syndetic} if it has bounded gaps, that is, there exists $N\in\mathbb{N}$ such that $[n,n+N]\cap A\ne\emptyset$ for every $n\in\mathbb{Z}_+$. The map $f$ is said to be \textit{strongly ergodic} if for any pair of nonempty open sets $U$, $V\subseteq X$, the set $N(U,V)=\{n\in\mathbb{Z}_+:f^n(U)\cap V\ne\emptyset\}$ is syndetic. The map $f$ is \textit{totally strongly ergodic} if each $f^k$, $k\in\mathbb{N}$, is strongly ergodic. A point $x\in X$ is said to be \textit{minimal} if for every neighborhood $U$ of $x$, the set $N(x,U)=\{n\in\mathbb{Z}_+:f^n(x)\in U\}$ is syndetic.

\section{Average chain transitivity and Average chain mixing}

Let $(X,f)$ be a dynamical system.
\begin{Def}
For $\delta>0$ and $x,y\in X$, a \textit{$\delta$-average-chain} of $f$ from $x$ to $y$ of \textit{length} $n\in\mathbb{N}$ is a finite sequence $x_0=x,x_1,\dots,x_n=y$ for which there exists $N\in\mathbb{N}$, $N\leqslant n$, such that for all $N\leqslant m\leqslant n$,
\begin{center}
$\displaystyle\frac{1}{m}\sum\limits_{i=0}^{m-1}d(f(x_i),x_{i+1})<\delta$.
\end{center}
\end{Def}

\begin{Def}
The map $f$ is said to be \textit{average chain transitive} if for any $\delta>0$ and any pair $x$, $y\in X$, there is a $\delta$-average-chain of $f$ from $x$ to $y$ of some length $n$. The map $f$ is said to be \textit{totally average chain transitive} if each $f^k$, $k\in\mathbb{N}$, is average chain transitive.
\end{Def}

\begin{Def}
The map $f$ is said to be \textit{average chain mixing} if for any $\delta>0$ and any pair $x$, $y\in X$, there exists $n_0\in\mathbb{N}$ such that for any given $n\geqslant n_0$, there is a $\delta$-average-chain of $f$ from $x$ to $y$ of length $n$.
\end{Def}

Clearly, the following implications hold: transitivity$\implies$ chain transitivity$\implies$ average chain transitivity. Also mixing$\implies$ chain mixing$\implies$ average chain mixing.

By the above implications we have that tent map on unit interval, doubling map on circle, irrational rotations on circle, identity map on a connected metric space are average chain mixing and adding machine on Cantor space (see \cite{R}) is average chain transitive.

Following are some examples of maps which are average chain mixing but not chain transitive.

\begin{ex}\label{ex}
Let $(X,d)$ be a metric space with more than one element and $a\in X$. Define $f:X\rightarrow X$ by $f(x)=a$ for every $x\in X$. Then clearly $f$ is not chain transitive since for any pair $x$, $y\in X$ with $y\ne a$, there is no $\delta$-chain of $f$ from $x$ to $y$ with $\delta<d(y,a)$. To see that $f$ is average chain mixing, take $\delta>0$ and $x$, $y\in X$. The case $y=a$ is trivial. Suppose $y\ne a$, say $d(a,y)=\epsilon>0$, choose $n_0\in\mathbb{N}$ such that $n_0>\epsilon/\delta$. For any $n\geqslant n_0$, define $x_i=f^i(x)$, $0\leqslant i\leqslant n-1$, $x_n=y$. Then the finite sequence $\{x_0,x_1,\dots,x_n\}$ satisfies $\frac{1}{m}\sum_{i=0}^{m-1}d(f(x_i),x_{i+1})<\delta$ for every $1\leqslant m\leqslant n$ which implies $f$ is average chain mixing.
\end{ex}

\begin{ex}
Let $X$ be a union of two disjoint circles with metric any $d$. Consider the identity map, $I_X$, on $X$. It is clear that $I_X$ is not chain transitive. However, one can prove that $I_X$ is average chain mixing (as done in Example \ref{ex}).
\end{ex}

\begin{ex}\label{ex1}
Consider $X=\{1,2,3,\dots,2k\}$ for some fixed $k\in\mathbb{N}$, with discrete metric $d$ and define $f:X\rightarrow X$ by $f(i)=(i+2)\mbox{mod}\hspace{0.5mm}2k$.  Clearly, $f$ is not chain transitive for if $i\in X$ is an odd number and $j\in X$ is an even number, then there does not exist any 1/2-chain of $f$ from $i$ to $j$. However, one can prove that $f$ is average chain mixing (as done in Example \ref{ex}).
\end{ex}

\begin{thm}
Let $(X,f)$ be a dynamical system. If $f^k$ is average chain transitive for some $k>1$, then so does $f$.
\end{thm}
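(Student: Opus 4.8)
The plan is to take a $\delta$-average-chain of $f^k$ and \emph{refine} it into a $\delta$-average-chain of $f$ by inserting, between consecutive points, their intermediate forward iterates under $f$. Since $f$ genuinely maps $f^s(x_i)$ to $f^{s+1}(x_i)$, the only errors that survive in the refined chain are those already present in the $f^k$-chain.

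First I would fix $\delta>0$ and a pair $x,y\in X$, and invoke average chain transitivity of $f^k$ to obtain a $\delta$-average-chain $x_0=x,x_1,\dots,x_n=y$ of $f^k$ with associated integer $N\leqslant n$, so that $\sum_{i=0}^{m-1}d(f^k(x_i),x_{i+1})<m\delta$ for every $N\leqslant m\leqslant n$. Then I would define a finite sequence $\{z_j\}_{j=0}^{kn}$ of length $kn$ by $z_{ki+s}=f^s(x_i)$ for $0\leqslant i\leqslant n-1$ and $0\leqslant s\leqslant k-1$, together with $z_{kn}=y$ (note $z_{ki}=x_i$ for all $i$, so the definition is consistent). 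The key computation is that within the block $ki\leqslant j\leqslant k(i+1)$ every term $d(f(z_j),z_{j+1})$ vanishes except the last one, which equals $d(f(f^{k-1}(x_i)),x_{i+1})=d(f^k(x_i),x_{i+1})$. Consequently, for any $\ell$ with $0\leqslant \ell\leqslant kn$, writing $\ell=km+r$ with $0\leqslant r<k$, the leftover partial block contributes nothing and we get $\sum_{j=0}^{\ell-1}d(f(z_j),z_{j+1})=\sum_{i=0}^{m-1}d(f^k(x_i),x_{i+1})$.

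Finally I would take $N'=kN\leqslant kn$ as the witnessing integer for $\{z_j\}$. Given $N'\leqslant \ell\leqslant kn$ and writing $\ell=km+r$ as above, we have $m=\lfloor \ell/k\rfloor\geqslant N$ and $\ell\geqslant km$, so the identity above yields
\[
\frac{1}{\ell}\sum_{j=0}^{\ell-1}d(f(z_j),z_{j+1})\leqslant \frac{1}{km}\sum_{i=0}^{m-1}d(f^k(x_i),x_{i+1})<\frac{\delta}{k}<\delta,
\]
which shows that $\{z_j\}_{j=0}^{kn}$ is a $\delta$-average-chain of $f$ from $x$ to $y$; hence $f$ is average chain transitive. (The hypothesis $k>1$ is only for emphasis; the argument works verbatim for $k=1$.)

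The main obstacle I anticipate is purely bookkeeping: one must verify the averaged-error bound for \emph{all} lengths $\ell\geqslant N'$, not merely for multiples of $k$. The error computation in the second paragraph resolves this, since any partial block added beyond a multiple of $k$ contributes only zero-error steps (those are true orbit steps of $f$), so the bound at the previous multiple of $k$ is inherited — and in fact one even gains the extra factor $1/k$, which is harmless here.
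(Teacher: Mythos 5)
Your proposal is correct and follows essentially the same route as the paper: refine the $f^k$-chain by inserting the intermediate iterates $f^s(x_i)$, observe that only the block-boundary steps contribute error, and bound the resulting averages. The only cosmetic difference is the choice of witnessing integer (you use $kN$, the paper uses $n$ itself and compares each partial sum to the full sum $\frac{1}{n}\sum_{i=0}^{n-1}d(f^k(y_i),y_{i+1})<\delta$); both choices are valid.
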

\begin{proof}
Let $\delta>0$ and $x$, $y\in X$. Since $f^k$ is average chain transitive, there exists a $\delta$-average-chain, say $\{y_i\}_{0\leqslant i\leqslant n}$, of $f^k$ from $x$ to $y$. So there exists $N\in\mathbb{N}$, $N\leqslant n$, such that for all $N\leqslant m\leqslant n$,
\begin{center}
$\displaystyle\frac{1}{m}\sum\limits_{i=0}^{m-1}d(f^k(y_i),y_{i+1})<\delta$.
\end{center}
Define 
\begin{center}
$x_{pk+j}=\left\{\begin{array}{ll} f^j(y_p),\hspace{1mm} 0\leqslant p\leqslant n-1, 0\leqslant j\leqslant k-1,\\ y_n,\hspace{7.5mm} p=n, j=0,\end{array}\right.$
\end{center}
that is, $\{x_i\}_{0\leqslant i\leqslant nk}=\{y_0=x, f(y_0),\dots, f^{k-1}(y_0), y_1, f(y_1),\dots, f^{k-1}(y_1),\dots,y_{n-1},\\f(y_{n-1}),\dots, f^{k-1}(y_{n-1}), y_n=y\}$.
Then for all $n\leqslant r\leqslant nk$,
\begin{center}
$\displaystyle\frac{1}{r}\sum\limits_{i=0}^{r-1}d(f(x_i),x_{i+1})\leqslant\frac{1}{n}\sum\limits_{i=0}^{n-1}d(f^k(y_i),y_{i+1})<\delta$.
\end{center}
Thus $f$ is average chain transitive.
\end{proof}

\begin{thm}
Let $(X,f)$ be a dynamical system and $f$ be Lipschitz function. If $f$ is average chain mixing, then it is totally average chain transitive.
\end{thm}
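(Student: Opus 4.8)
Fix a Lipschitz constant $L\geqslant 1$ for $f$ (enlarging the constant if necessary), so that $d(f^j(a),f^j(b))\leqslant L^j d(a,b)$ for all $a,b\in X$ and all $j\geqslant 0$. Fix $k\in\mathbb{N}$, $\delta>0$ and $x,y\in X$; the goal is to produce a $\delta$-average-chain of $f^k$ from $x$ to $y$. The idea is to run an $f$-chain whose length is a multiple of $k$ and then read off every $k$-th point; the Lipschitz hypothesis is what controls the error accumulated along the $k$ intermediate $f$-steps that get collapsed into one $f^k$-step.

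First I would set $\delta'=\delta/(kL^{k-1})$. Since $f$ is average chain mixing, there is $n_0\in\mathbb{N}$ such that for every $n\geqslant n_0$ there is a $\delta'$-average-chain of $f$ from $x$ to $y$ of length $n$. Pick any $m\in\mathbb{N}$ with $mk\geqslant n_0$ and let $\{x_i\}_{0\leqslant i\leqslant mk}$, with $x_0=x$ and $x_{mk}=y$, be such a $\delta'$-average-chain of $f$ of length $mk$; let $N'\leqslant mk$ be the associated integer, so that $\frac{1}{s}\sum_{i=0}^{s-1}d(f(x_i),x_{i+1})<\delta'$ for all $N'\leqslant s\leqslant mk$. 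Then define $y_p=x_{pk}$ for $0\leqslant p\leqslant m$, so that $y_0=x$ and $y_m=y$.

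The key estimate is the telescoping bound
\[
d(f^k(y_p),y_{p+1})\leqslant\sum_{j=0}^{k-1}d\bigl(f^{k-j}(x_{pk+j}),f^{k-1-j}(x_{pk+j+1})\bigr)\leqslant L^{k-1}\sum_{j=0}^{k-1}d(f(x_{pk+j}),x_{pk+j+1}),
\]
where the second inequality uses $d\bigl(f^{k-1-j}(f(x_{pk+j})),f^{k-1-j}(x_{pk+j+1})\bigr)\leqslant L^{k-1-j}d(f(x_{pk+j}),x_{pk+j+1})\leqslant L^{k-1}d(f(x_{pk+j}),x_{pk+j+1})$ since $L\geqslant 1$. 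Summing over $0\leqslant p\leqslant r-1$, the inner sums telescope into $\sum_{i=0}^{rk-1}d(f(x_i),x_{i+1})$, so that for every $r$ with $rk\geqslant N'$ (equivalently $r\geqslant\lceil N'/k\rceil$) and $r\leqslant m$ we get
\[
\frac{1}{r}\sum_{p=0}^{r-1}d(f^k(y_p),y_{p+1})\leqslant\frac{L^{k-1}}{r}\sum_{i=0}^{rk-1}d(f(x_i),x_{i+1})<\frac{L^{k-1}}{r}\cdot\delta' rk=kL^{k-1}\delta'=\delta.
\]
Since $N'\leqslant mk$ gives $\lceil N'/k\rceil\leqslant m$, the sequence $\{y_p\}_{0\leqslant p\leqslant m}$ is a $\delta$-average-chain of $f^k$ from $x$ to $y$ (with associated integer $\lceil N'/k\rceil$), so $f^k$ is average chain transitive; as $k$ was arbitrary, $f$ is totally average chain transitive.

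I do not expect a genuine obstacle here: the only points needing care are the bookkeeping in the telescoping inequality (the exponents $L^{k-1-j}$ and reindexing the double sum as a single sum over $0\leqslant i\leqslant rk-1$) and checking that the index ranges are non-vacuous, i.e.\ that $\lceil N'/k\rceil\leqslant m$ so the averaging condition actually applies. I would also remark that the same construction in fact yields a $\delta$-average-chain of $f^k$ of \emph{every} length $m\geqslant\lceil n_0/k\rceil$, so $f^k$ is even average chain mixing, although only average chain transitivity is needed for the statement.
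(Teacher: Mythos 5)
Your proposal is correct and follows essentially the same route as the paper's proof: the same rescaled error $\delta/(kL^{k-1})$, the same choice of a chain of length $mk\geqslant n_0$ via average chain mixing, the same subsampling $y_p=x_{pk}$, and the same Lipschitz telescoping estimate. Your version is slightly more careful in that it verifies the averaging condition for all $r\geqslant\lceil N'/k\rceil$ rather than only for the full length $m$, but this is a refinement of bookkeeping, not a different argument.
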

\begin{proof}

Let $k>1$, $\delta>0$ and $x$, $y\in X$. Since $f$ is Lipschitz function, there exists $L>0$ such that $d(f(a),f(b))\leqslant L\hspace{0.5mm}d(a,b)$ for all $a$, $b\in X$. Without loss of generality, we can assume that $L\geqslant 1$. Choose $\epsilon=\delta/kL^{k-1}$. Since $f$ is average chain mixing, there exists $n_0\in\mathbb{N}$ such that for any $n\geqslant n_0$, there is an $\epsilon$-average-chain of $f$ from $x$ to $y$ of length $n$. Choose $m>0$ such that $mk\geqslant n_0$ so that $\{x_0=x,x_1,\dots,x_{mk}=y\}$ is an $\epsilon$-average-chain of $f$ from $x$ to $y$. Then there exists $N\in\mathbb{N}$, $N\leqslant mk$, such that for all $N\leqslant r\leqslant mk$,
\begin{center}
$\displaystyle\frac{1}{r}\sum\limits_{i=0}^{r-1}d(f(x_i),x_{i+1})<\epsilon$.
\end{center}
Putting $\epsilon_i=d(f(x_i),x_{i+1})$, $0\leqslant i\leqslant mk-1$, we have $\frac{1}{mk}\sum_{i=0}^{mk-1}\epsilon_i<\epsilon$. Define $y_j=x_{jk}$, $0\leqslant j\leqslant m$. We claim that $\{y_0,y_1,\dots,y_m\}$ is a $\delta$-average-chain of $f^k$ from $x$ to $y$. Note that using Lipschitz condition of $f$ with the fact that $L\geqslant 1$, we have\\
$d(f^k(y_0),y_1)=d(f^k(x_0),x_k)\leqslant d(f^k(x_0),f^{k-1}(x_1))+ d(f^{k-1}(x_1),f^{k-2}(x_2))+\dots+ d(f(x_{k-1}),x_k)\leqslant L^{k-1}\epsilon_0+L^{k-2}\epsilon_1+\dots+\epsilon_{k-1} \leqslant L^{k-1}(\epsilon_0+\epsilon_1+\dots+\epsilon_{k-1})$.\\
Similarly, one can prove that\\
$d(f^k(y_1),y_2)\leqslant L^{k-1}(\epsilon_k+\epsilon_{k+1}+\dots+\epsilon_{2k-1})$.\\
$\dots$\\
$d(f^k(y_{m-1}),y_m)\leqslant L^{k-1}(\epsilon_{(m-1)k}+\epsilon_{(m-1)k+1}+\dots+\epsilon_{mk-1})$.\\
This in turn gives
\begin{eqnarray*}
\displaystyle\frac{1}{m}\sum\limits_{i=0}^{m-1}d(f^k(y_i),y_{i+1})&\leqslant&\frac{1}{m}L^{k-1}(\epsilon_0+\epsilon_1+\dots+\epsilon_{mk-1})\\ &<&\frac{1}{m}L^{k-1}(\epsilon mk)=L^{k-1}(\epsilon k)=\delta.
\end{eqnarray*}
Thus $f^k$ is average chain transitive.
\end{proof}

\begin{thm}
Let $(X,f)$ be a dynamical system. If $f$ is average chain mixing, then $f\times f$ is average chain transitive.
\end{thm}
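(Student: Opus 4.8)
The plan is to run the average-chain-mixing hypothesis separately in the two coordinates and then ``stack'' the two resulting chains into a single chain in $X\times X$. Equip $X\times X$ with the product metric $D\bigl((a,b),(c,d)\bigr)=\max\{d(a,c),d(b,d)\}$; any of the standard product metrics works equally well, since all of them are bounded above by $d(a,c)+d(b,d)$, which is all the argument uses. Fix $\delta>0$ and points $(x_1,x_2),(y_1,y_2)\in X\times X$, and set $\delta'=\delta/2$.

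First I would apply average chain mixing of $f$ to the pair $x_1,y_1$ to obtain $n_1\in\mathbb{N}$ and to the pair $x_2,y_2$ to obtain $n_2\in\mathbb{N}$, so that for every $n\geqslant n_1$ there is a $\delta'$-average-chain of $f$ from $x_1$ to $y_1$ of length $n$, and likewise for every $n\geqslant n_2$ from $x_2$ to $y_2$. Put $n=\max\{n_1,n_2\}$. This is precisely the point where \emph{mixing} (and not mere transitivity) is needed: it lets me pick chains of one and the same length $n$ in both coordinates. So I obtain a $\delta'$-average-chain $\{u_i\}_{0\leqslant i\leqslant n}$ of $f$ from $x_1$ to $y_1$ and a $\delta'$-average-chain $\{v_i\}_{0\leqslant i\leqslant n}$ of $f$ from $x_2$ to $y_2$, with corresponding thresholds $N_1\leqslant n$ and $N_2\leqslant n$ coming from the definition of a $\delta'$-average-chain.

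Next I would form the finite sequence $\{(u_i,v_i)\}_{0\leqslant i\leqslant n}$ in $X\times X$, which runs from $(x_1,x_2)$ to $(y_1,y_2)$, and set $N=\max\{N_1,N_2\}$, noting that $N\leqslant n$. For every $m$ with $N\leqslant m\leqslant n$ one has
\[
\frac{1}{m}\sum_{i=0}^{m-1}D\bigl((f\times f)(u_i,v_i),(u_{i+1},v_{i+1})\bigr)\leqslant\frac{1}{m}\sum_{i=0}^{m-1}d(f(u_i),u_{i+1})+\frac{1}{m}\sum_{i=0}^{m-1}d(f(v_i),v_{i+1})<\delta'+\delta'=\delta,
\]
where the last inequality uses $m\geqslant N\geqslant N_1$ and $m\geqslant N\geqslant N_2$. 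Hence $\{(u_i,v_i)\}_{0\leqslant i\leqslant n}$ is a $\delta$-average-chain of $f\times f$ from $(x_1,x_2)$ to $(y_1,y_2)$; since $\delta$ and the two pairs were arbitrary, $f\times f$ is average chain transitive.

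I do not expect any genuine obstacle here. The only step that must not be glossed over is the length-synchronization in the middle paragraph: average chain transitivity of $f$ by itself would yield chains of \emph{some} lengths $n$ and $n'$ in the two coordinates, which need not agree, so the stacking construction really does require the mixing hypothesis. In fact the same computation shows that $f\times f$ inherits average chain \emph{mixing} from $f$, since every $n\geqslant\max\{n_1,n_2\}$ is admissible in the argument above.
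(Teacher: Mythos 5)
Your proof is correct and follows essentially the same route as the paper: synchronize the lengths of the two coordinate chains using the mixing hypothesis, pair them up under the max metric, and bound the average of $d^*$ by the sum of the two coordinate averages with threshold $N=\max\{N_1,N_2\}$. The closing observation that the argument actually yields average chain mixing of $f\times f$ is a correct (if unstated in the paper) strengthening.
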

\begin{proof}
Consider the metric $d^*((x_1,y_1),(x_2,y_2))=\max\{d(x_1,x_2), d(y_1,y_2)\}$, $x_1,x_2,\\y_1,y_2\in X$ on $X\times X$. Let $\delta>0$ and $(a,b)$, $(c,d)\in X\times X$. Then there exist $n_1$, $n_2\in\mathbb{N}$ such that for any $p\geqslant n_1$ and any $q\geqslant n_2$, there are $\delta/2$-average-chains of $f$ from $a$ to $c$ and $b$ to $d$ of length $p$ and $q$ respectively. Taking $n_0=\max\{n_1,n_2\}$ we have $\{x_0=a,x_1,\dots,x_{n_0}=c\}$ and $\{y_0=b,y_1,\dots,y_{n_0}=d\}$ are $\delta/2$-average-chains of $f$, that is, there exist $N_1$, $N_2\in\mathbb{N}$, $N_1$, $N_2\leqslant n_0$, such that

\begin{center}
$\displaystyle\frac{1}{m}\sum\limits_{i=0}^{m-1}d(f(x_i),x_{i+1})<\delta/2\hspace{4mm}$ for all $N_1\leqslant m\leqslant n_0$
\end{center}
and
\begin{center}
$\displaystyle\frac{1}{r}\sum\limits_{i=0}^{r-1}d(f(y_i),y_{i+1})<\delta/2\hspace{4mm}$ for all $N_2\leqslant r\leqslant n_0$.
\end{center}
We claim that $\{(x_i,y_i)\}_{0\leqslant i\leqslant n_0}$ is the required $\delta$-average-chain of $f\times f$ from $(a,b)$ to $(c,d)$. Taking $N=\max\{N_1,N_2\}$ we have for all $N\leqslant n\leqslant n_0$,
\begin{eqnarray*}
\displaystyle\frac{1}{n}\sum\limits_{i=0}^{n-1}d^*((f\times f)(x_i,y_i),(x_{i+1},y_{i+1}))\!\!&\leqslant&\!\! \frac{1}{n}\sum\limits_{i=0}^{n-1}[d(f(x_i),x_{i+1})+d(f(y_i),y_{i+1})]\\ &<&\!\!(\delta/2+\delta/2)=\delta.
\end{eqnarray*}
Thus $f\times f$ is average chain transitive.
\end{proof}

\begin{thm}
Let $(X,f)$ be a dynamical system. If $f$ is totally average chain transitive, then $f\times f$ is average chain transitive.
\end{thm}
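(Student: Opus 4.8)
The plan is to prove the stronger statement that $f$ itself is average chain mixing; the assertion then follows immediately from the preceding theorem (average chain mixing implies that $f\times f$ is average chain transitive). So fix $\delta>0$ and $x$, $y\in X$, and aim to produce, for every sufficiently large $n$, a $\delta$-average-chain of $f$ from $x$ to $y$ of length $n$. Two elementary manipulations will be used repeatedly. First, concatenating a $\delta$-average-chain of $f$ of length $p$ with one of length $q$ sharing the appropriate endpoint yields a $\delta$-average-chain of length $p+q$: in the definition one may take $N$ equal to the total length, and since the total error is $<(p+q)\delta$, the average over all $p+q$ steps is $<\delta$. Second, appending the single step $w\mapsto f(w)$ to a $\delta$-average-chain ending at $w$ gives a $\delta$-average-chain one step longer with the same total error. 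In particular, for each $z\in X$ the set $S_z$ of lengths of $\delta$-average-chains of $f$ from $z$ to $z$ is a sub-semigroup of $(\mathbb{N},+)$, nonempty because $f=f^{1}$ is average chain transitive.

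Next I would introduce a residue invariant. Let $g=\gcd S_x$. For any $z\in X$ the length of a $\delta$-average-chain of $f$ from $x$ to $z$ is determined modulo $g$: given two such chains, of lengths $\ell_1$ and $\ell_2$, concatenate each with a fixed $\delta$-average-chain of $f$ from $z$ to $x$ (available by average chain transitivity) to obtain $\ell_1+\ell'$, $\ell_2+\ell'\in S_x$, whence $g\mid(\ell_1-\ell_2)$. Write $\rho(z)\in\mathbb{Z}/g\mathbb{Z}$ for this residue; appending one step shows $\rho(f(z))=\rho(z)+1$, while $\rho(x)=0$. The heart of the argument is that $g=1$, and this is the one place the full strength of total average chain transitivity is used: suppose $g\geqslant 2$; since $f^{g}$ is average chain transitive, take a $\delta$-average-chain of $f^{g}$ from $x$ to $f(x)$ of some length $p$ and expand it, exactly as in the proof of the theorem asserting that average chain transitivity of $f^{k}$ implies that of $f$, to a $\delta$-average-chain of $f$ from $x$ to $f(x)$ of length $gp$ (interpolating iterates replaces each $f^{g}$-step by $g$ genuine $f$-steps without changing the total error, so the average over the $gp$ steps stays $<\delta$). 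Then $\rho(f(x))\equiv gp\equiv 0\pmod g$, contradicting $\rho(f(x))=\rho(x)+1=1\not\equiv 0\pmod g$. Hence $g=1$, and by the same argument $\gcd S_y=1$, so both $S_x$ and $S_y$ contain every integer beyond some threshold. Fixing a $\delta$-average-chain of $f$ from $x$ to $y$ of some length $\ell_0$ and concatenating it, for each large $n$, with a $\delta$-average-chain of $f$ from $y$ to $y$ of length $n-\ell_0\in S_y$, I obtain a $\delta$-average-chain of $f$ from $x$ to $y$ of length $n$. Thus $f$ is average chain mixing, and the preceding theorem finishes the proof.

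The step I expect to be the main obstacle is showing $g=1$: it requires a correct set-up of the residue $\rho$ (resting on the semigroup bookkeeping and on the length of a chain between two fixed points being well defined modulo $g$) and, crucially, the use of average chain transitivity of the particular power $f^{g}$ together with the expansion construction to manufacture an $f$-average-chain of length divisible by $g$ from $x$ to $f(x)$. The remaining ingredients — closure of $S_z$ under addition, the one-step extension, and the fact that a sub-semigroup of $\mathbb{N}$ of greatest common divisor $1$ is cofinite — are routine.
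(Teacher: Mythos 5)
Your proof is correct, but it takes a genuinely different route from the paper's. The paper argues directly in the product: it fixes a $\delta/4$-average-chain from $a$ to $c$ of length $n$ and a $\delta/4$-average-loop at $c$ of length $p$, uses average chain transitivity of $f^{p}$ to join $f^{n}(b)$ to $d$ by a chain of $f^{p}$ of length $q$, expands the latter to an $f$-chain, and then synchronizes the two coordinates at the common length $n+pq$ (padding the first coordinate with $q$ copies of the loop at $c$ and the second with the true orbit segment of $b$). You instead prove the stronger structural statement that total average chain transitivity implies average chain mixing, and then invoke the preceding theorem; this shows the two product theorems of Section 3 are not independent, since the hypothesis of this one implies the hypothesis of the other. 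Your gcd/residue argument is the classical one from the theory of chain recurrence (cf.\ Richeson--Wiseman) transplanted to average chains, and it goes through because in the paper's definition of a $\delta$-average-chain one may always take $N=n$, so the condition reduces to the full-length average being $<\delta$; that is exactly what makes your concatenation, one-step extension, and $f^{g}$-to-$f$ expansion lemmas work (each preserves or decreases the full-length average), and it is worth stating this reduction explicitly. The trade-off: the paper's proof is shorter and entirely constructive for the specific conclusion, while yours yields the sharper intermediate result (and, as a by-product, that the loop-length semigroup $S_{y}$ is cofinite), at the cost of the extra bookkeeping needed to show the residue $\rho$ is well defined and that $g=1$.
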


\begin{proof}
Let $\delta>0$ and $(a,b)$, $(c,d)\in X\times X$. Since $f$ is average chain transitive, there exists a $\delta/4$-average-chain of $f$ from $a$ to $c$, say $\{u_0=a,u_1,\dots,u_n=c\}$, that is, there exists $N_1\in\mathbb{N}$, $N_1\leqslant n$, such that
\begin{center}
$\displaystyle\frac{1}{r}\sum\limits_{i=0}^{r-1}d(f(u_i),u_{i+1})<\delta/4\hspace{4mm}$ for all $N_1\leqslant r\leqslant n$.
\end{center}
This gives
\begin{center}
$\displaystyle\frac{1}{n}\sum\limits_{i=0}^{n-1}d(f(u_i),u_{i+1})<\delta/4$.
\end{center}
Again using the average chain transitivity of $f$, there exists a $\delta/4$-average-chain of $f$ from $c$ to $c$, say $\{c=v_0,v_1,\dots,v_p=c\}$, that is, there exists $N_2\in\mathbb{N}$, $N_2\leqslant p$, such that

\begin{center}
$\displaystyle\frac{1}{r}\sum\limits_{i=0}^{r-1}d(f(v_i),v_{i+1})<\delta/4\hspace{4mm}$ for all $N_2\leqslant r\leqslant p$. \end{center}\newpage
This gives
\begin{center}
$\displaystyle\frac{1}{p}\sum\limits_{i=0}^{p-1}d(f(v_i),v_{i+1})<\delta/4$.
\end{center}
Since $f$ is totally average chain transitive, $f^p$ is average chain transitive so that there exists a $\delta/2$-average-chain of $f^p$ from $f^n(b)$ to $d$, say $\{f^n(b)=y_0,y_1,\dots,y_q=d\}$. Therefore

\begin{center}
$\displaystyle\frac{1}{q}\sum\limits_{i=0}^{q-1}d(f^p(y_i),y_{i+1})<\delta/2$.
\end{center}
Define
\begin{center}
$x_{pm+j}=\left\{\begin{array}{ll} f^j(y_m),\hspace{1mm} 0\leqslant m\leqslant q-1, 0\leqslant j\leqslant p-1,\\ y_q,\hspace{8.5mm} m=q, j=0,\end{array}\right.$
\end{center}
that is, $\{x_i\}_{0\leqslant i\leqslant pq}=\{f^n(b)=y_0, f(y_0),\dots, f^{p-1}(y_0), y_1, f(y_1),\dots, f^{p-1}(y_1),\dots,\\ y_{q-1},f(y_{q-1}),\dots, f^{p-1}(y_{q-1}), y_q\}$. Now define

\begin{center}
$z_i=\left\{\begin{array}{ll} f^i(b),\hspace{1mm} 0\leqslant i\leqslant n-1,\\ x_{i-n},\hspace{1mm} n\leqslant i\leqslant n+pq,\end{array}\right.$
\end{center}
that is, $\{z_i\}_{0\leqslant i\leqslant n+pq}=\{b,f(b),\dots,f^{n-1}(b),f^n(b)=y_0, f(y_0),\dots, f^{p-1}(y_0),y_1,\\ f(y_1),\dots, f^{p-1}(y_1),\dots, y_{q-1},f(y_{q-1}),\dots, f^{p-1}(y_{q-1}), y_q=d\}$. Consider
\begin{eqnarray*}
\frac{1}{n+pq}\sum\limits_{i=0}^{n+pq-1}d(f(z_i),z_{i+1})&=& \frac{1}{n+pq}\sum\limits_{j=0}^{q-1}d(f^p(y_j),y_{j+1})\\ &<& \frac{1}{q}\sum\limits_{j=0}^{q-1}d(f^p(y_j),y_{j+1})<\delta/2.
\end{eqnarray*}
Thus $\{z_i\}_{0\leqslant i\leqslant n+pq}$ is a $\delta/2$-average-chain of $f$ from $b$ to $d$.

\vspace{2mm}
Now define $\{w_i\}_{0\leqslant i\leqslant n+pq}=\{a=u_0,u_1,\dots,u_n=c,\underbrace{v_1,v_2,\dots,v_p=c}_{q\mbox{-times}},v_1,v_2,\\\dots,v_p=c,\dots,v_1,v_2,\dots,v_p=c\}$. Consider

\begin{eqnarray*}
\frac{1}{n+pq}\!\!\!\!\sum\limits_{i=0}^{n+pq-1}\!\!\!\!d(f(w_i),w_{i+1})\!\!\!&=&\!\!\! \frac{1}{n+pq}\Big[\sum\limits_{j=0}^{n-1}d(f(u_j),u_{j+1})+q\sum\limits_{k=0}^{p-1}d(f(v_k),v_{k+1})\Big]\\ &<&\!\!\! \frac{1}{n}\sum\limits_{j=0}^{n-1}d(f(u_j),u_{j+1})+\frac{q}{pq}\sum\limits_{k=0}^{p-1}d(f(v_k),v_{k+1})\\ &<&\!\!\!(\delta/4+\delta/4)=\delta/2.
\end{eqnarray*}
Thus $\{w_i\}_{0\leqslant i\leqslant n+pq}$ is a $\delta/2$-average-chain of $f$ from $a$ to $c$.\\ Hence $\{(w_i,z_i)\}_{0\leqslant i\leqslant n+pq}$ is a $\delta$-average-chain of $f\times f$ from $(a,b)$ to $(c,d)$ under the metric $d^*$.
\end{proof}

\section{Almost average shadowing property}

Let $(X,f)$ be a dynamical system.
\begin{Def}
For $\delta>0$, a sequence $\{x_i\}_{i\geqslant 0}$ in $X$ is said to be \textit{almost $\delta$-average-pseudo-orbit} of $f$ if
\begin{center}
$\limsup\limits_{n\rightarrow\infty}\displaystyle\frac{1}{n}\sum\limits_{i=0}^{n-1}d(f(x_i),x_{i+1})<\delta$.
\end{center}
\end{Def}

\begin{Def}
For $\epsilon>0$, an almost $\delta$-average-pseudo-orbit $\{x_i\}_{i\geqslant 0}$ of $f$ is said to be \textit{$\epsilon$-shadowed} in average by a point $x\in X$ if
\begin{center}
$\limsup\limits_{n\rightarrow\infty}\displaystyle\frac{1}{n}\sum\limits_{i=0}^{n-1}d(f^i(x),x_{i})<\epsilon$.
\end{center}
\end{Def}

\begin{Def}
A map $f$ is said to have the \textit{almost average shadowing property} (ALASP) if for any $\epsilon>0$, there is a $\delta>0$ such that every almost $\delta$-average-pseudo-orbit $\{x_i\}_{i\geqslant 0}$ of $f$ is $\epsilon$-shadowed in average by some point in $X$.
\end{Def}
From the definition, it follows that the ALASP implies the ASP.
\begin{rmk}
Consider the space $X_1$ and the map $f_1$ as given in \cite[Example 9.1]{K1}. By similar arguments as given in \cite[Theorem 9.2]{K1} one can prove that $f_1$ has the ASP but does not have the ALASP.
\end{rmk}
Clearly, constant maps have the ALASP. Following is an example of a map which does not have the ALASP.

\begin{ex}\label{ex2}
Consider $X=\{a,b\}$ with discrete metric $d$ and $f$ as the cycle permutation of $X$ defined by $f(a)=b$, $f(b)=a$. Fix $\epsilon=1/3$. Consider the following finite sequences\\ $y_0=\{a,b\}$,\\ $y_1=\{a,b,b,a\}$,\\ $y_2=\{a,b,a,b,b,a,b,a\}$,\\ $\dots$\\
$y_n=\{\underbrace{a,b,\dots,a,b}_{2^{n}},
\underbrace{b,a,\dots,b,a}_{2^{n}}\}$,
\\$\dots$\\ Take $\{x_i\}_{i\geqslant 0}=\{y_0\vee y_1\vee\dots\vee y_n\vee\dots\}$, where $\vee$ denotes the chain join. For example, $y_0\vee y_1=\{a,b,a,b,b,a\}$. Let $K_n=\sum\limits_{j=1}^{n}2^j=2(2^n-1)$, $n\in\mathbb{N}$. Then
\begin{center}
$\displaystyle\frac{1}{K_n}\sum\limits_{i=0}^{K_n-1}d(f(x_i),x_{i+1})=\frac{2(n-1)}{2(2^n-1)}\rightarrow 0$ as $n\rightarrow\infty$
\end{center}
which implies
\begin{center}
$\limsup\limits_{n\rightarrow\infty}\displaystyle\frac{1}{K_n}\sum\limits_{i=0}^{K_n-1}d(f(x_i),x_{i+1})=0$.
\end{center}
This gives $\{x_i\}_{i\geqslant 0}$ is an almost $\delta$-average-pseudo-orbit of $f$ for every $\delta>0$.
Now for $z\in\{a,b\}$, suppose $z=a$ (the case $z=b$ follows similarly).\\

$x_i=a,b$, $\;a,b,b,a$, $\;a,b,a,b,b,a,b,a$, $\;a,b,a,b,a,b,a,b,b,a,b,a,b,a,b,a\dots$\\
$f^i(z)=a,b$, $\;a,b,a,b$, $\;a,b,a,b,a,b,a,b$, $\;a,b,a,b,a,b,a,b,a,b,a,b,a,b,a,b\dots$\\
Then
\begin{center}
$\displaystyle\frac{1}{K_n}\sum\limits_{i=0}^{K_n-1}d(f^i(z),x_i)=\frac{2(2^{n-1}-1)}{2(2^n-1)}\rightarrow\frac{1}{2}$ as $n\rightarrow\infty$
\end{center}
which gives
\begin{center}
$\limsup\limits_{n\rightarrow\infty}\displaystyle\frac{1}{K_n}\sum\limits_{i=0}^{K_n-1}d(f^i(z),x_i)=\frac{1}{2}>\epsilon$.
\end{center}
Thus $f$ does not have the ALASP.
\end{ex}
\begin{thm}\label{T3}
Let $(X,f)$ be a dynamical system. If $f$ has the ALASP, then so does $f^k$ for every $k>1$.
\end{thm}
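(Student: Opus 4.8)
The plan is to deduce the ALASP of $f^k$ from that of $f$ by the standard interpolation trick, keeping careful track of the factor $k$ that appears because an average over $n$ terms for $f$ corresponds to an average over roughly $n/k$ terms for $f^k$. Fix $\epsilon>0$, put $\epsilon'=\epsilon/k$, and let $\delta>0$ be the number given by the ALASP of $f$ for this $\epsilon'$. I claim that the same $\delta$ works for $f^k$.

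So suppose $\{x_i\}_{i\geqslant 0}$ is an almost $\delta$-average-pseudo-orbit of $f^k$. Interpolate it to a sequence $\{y_j\}_{j\geqslant 0}$ for $f$ by setting $y_{ik+j}=f^j(x_i)$ for $i\geqslant 0$ and $0\leqslant j\leqslant k-1$; thus $y_{ik}=x_i$ for every $i$. For $0\leqslant j\leqslant k-2$ one gets $d(f(y_{ik+j}),y_{ik+j+1})=d(f^{j+1}(x_i),f^{j+1}(x_i))=0$, whereas $d(f(y_{ik+k-1}),y_{(i+1)k})=d(f^k(x_i),x_{i+1})$. Consequently, for any $n$, writing $m=\lfloor n/k\rfloor$, the (incomplete) last block contributes nothing and
\[
\frac{1}{n}\sum_{j=0}^{n-1}d(f(y_j),y_{j+1})=\frac{1}{n}\sum_{i=0}^{m-1}d(f^k(x_i),x_{i+1})\leqslant\frac{1}{k}\cdot\frac{1}{m}\sum_{i=0}^{m-1}d(f^k(x_i),x_{i+1}),
\]
since $n\geqslant mk$. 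As $n\to\infty$ we have $m\to\infty$, so taking $\limsup$ shows the left-hand side has $\limsup$ at most $\tfrac1k$ times $\limsup_{m\to\infty}\tfrac1m\sum_{i=0}^{m-1}d(f^k(x_i),x_{i+1})$, which is $<\delta$; hence $\{y_j\}$ is an almost $\delta$-average-pseudo-orbit of $f$.

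Next, apply the ALASP of $f$ to obtain $z\in X$ with $\limsup_{n\to\infty}\tfrac1n\sum_{j=0}^{n-1}d(f^j(z),y_j)<\epsilon'$. Since $(f^k)^i(z)=f^{ik}(z)$, $x_i=y_{ik}$, and all summands are nonnegative, for each $m$ we have $\sum_{i=0}^{m-1}d(f^{ik}(z),y_{ik})\leqslant\sum_{j=0}^{mk-1}d(f^j(z),y_j)$, so that
\[
\frac{1}{m}\sum_{i=0}^{m-1}d\big((f^k)^i(z),x_i\big)\leqslant k\cdot\frac{1}{mk}\sum_{j=0}^{mk-1}d(f^j(z),y_j).
\]
Passing to $\limsup$ over $m$, the right-hand side is at most $k$ times the full $\limsup$ over all $n$ (the subsequence $n=mk$ is no larger), hence $<k\epsilon'=\epsilon$. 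Therefore $z$ $\epsilon$-shadows $\{x_i\}$ in average under $f^k$, which proves that $f^k$ has the ALASP.

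The argument is essentially bookkeeping; the only delicate point is the mismatch between averaging over $m$ terms for $f^k$ and over $n\approx mk$ terms for $f$. This mismatch is harmless in the first direction (the interpolation only improves the average error, by a factor $1/k$) and is exactly absorbed in the second direction by having chosen $\epsilon'=\epsilon/k$. Note also that no compactness of $X$ is required: the interpolation points contribute zero to the consecutive-distance sums, and the return step uses only the inequality $\sum_i a_{ik}\leqslant\sum_j a_j$ valid for any nonnegative sequence $(a_j)$.
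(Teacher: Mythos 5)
Your proof is correct and follows essentially the same route as the paper's: choose $\delta$ for $\epsilon/k$, interpolate the $f^k$-pseudo-orbit by inserting the intermediate iterates (which contribute zero to the consecutive-distance sums), shadow the interpolated orbit under $f$, and recover the $f^k$-shadowing estimate via $\sum_i a_{ik}\leqslant\sum_j a_j$ together with the factor $k$ absorbed by the choice $\epsilon'=\epsilon/k$. Your bookkeeping with $m=\lfloor n/k\rfloor$ is, if anything, slightly more careful than the paper's.
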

\begin{proof}

Let $k>1$ and $\epsilon>0$. Suppose $\delta>0$ is obtained for $\epsilon/k$ by the ALASP of $f$. Let $\{y_i\}_{i\geqslant 0}$ be an almost $\delta$-average-pseudo-orbit of $f^k$. Then
\begin{center}
$\limsup\limits_{n\rightarrow\infty}\displaystyle\frac{1}{n}\sum\limits_{i=0}^{n-1}d(f^k(y_i),y_{i+1})<\delta$.
\end{center}
Define $x_{mk+j}=f^j(y_m)$ for $m\geqslant 0$, $0\leqslant j\leqslant k-1$. Then for $n\in\mathbb{N}$, there exist $m\geqslant 0$ and $0\leqslant j\leqslant k-1$ such that $n=mk+j$. Therefore we have
\begin{eqnarray*}
\limsup\limits_{n\rightarrow\infty}\frac{1}{n}\sum\limits_{i=0}^{n-1}d(f(x_i),x_{i+1})&=& \limsup\limits_{m\rightarrow\infty}\frac{1}{mk+j}\sum\limits_{i=0}^{mk+j-1}d(f(x_i),x_{i+1})\\ &=& \limsup\limits_{m\rightarrow\infty}\frac{1}{mk+j}\sum\limits_{i=0}^{m}d(f^k(y_i),y_{i+1})\\ &\leqslant& \limsup\limits_{m\rightarrow\infty}\frac{1}{m+1}\sum\limits_{i=0}^{m}d(f^k(y_i),y_{i+1})<\delta.
\end{eqnarray*}
This gives $\{x_i\}_{i\geqslant 0}$ is an almost $\delta$-average-pseudo-orbit of $f$ so that there exists $z\in X$ such that

\begin{center}
$\limsup\limits_{n\rightarrow\infty}\displaystyle\frac{1}{n}\sum\limits_{i=0}^{n-1}d(f^i(z),x_{i})<\epsilon/k$.
\end{center}
Now consider
\begin{eqnarray*}
\limsup\limits_{n\rightarrow\infty}\frac{1}{n}\sum\limits_{i=0}^{n-1}d((f^k)^i(z),y_i)&=& \limsup\limits_{n\rightarrow\infty}\frac{1}{n}\sum\limits_{i=0}^{n-1}d(f^{ki}(z),x_{ki})\\ &\leqslant& \limsup\limits_{n\rightarrow\infty}\frac{1}{n}\sum\limits_{i=0}^{n-1}\sum\limits_{j=0}^{k-1} d(f^{ki+j}(z),x_{ki+j})\\ &=& k\limsup\limits_{n\rightarrow\infty}\frac{1}{nk}\sum\limits_{r=0}^{nk-1}d(f^r(z),x_{r})<\epsilon.
\end{eqnarray*}
Thus $f^k$ has the ALASP.
\end{proof}

\begin{thm}
Let $(X,f)$, $(Y,g)$ be two bounded dynamical systems with metric $d_1$, $d_2$ respectively. If $f$ and $g$ have the ALASP, then so does $f\times g$.
\end{thm}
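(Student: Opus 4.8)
The plan is to work with the product metric $d^*\big((x_1,y_1),(x_2,y_2)\big)=\max\{d_1(x_1,x_2),d_2(y_1,y_2)\}$ on $X\times Y$ (the sum metric would serve equally well) and to treat the two coordinates separately. Given $\epsilon>0$, I would first invoke the ALASP of $f$ for $\epsilon/2$ to obtain $\delta_1>0$ and the ALASP of $g$ for $\epsilon/2$ to obtain $\delta_2>0$, and then set $\delta=\min\{\delta_1,\delta_2\}$.

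Next, let $\{(x_i,y_i)\}_{i\geqslant 0}$ be an almost $\delta$-average-pseudo-orbit of $f\times g$, so that $\limsup_{n\rightarrow\infty}\frac1n\sum_{i=0}^{n-1}d^*\big((f\times g)(x_i,y_i),(x_{i+1},y_{i+1})\big)<\delta$. Since
\[
d_1(f(x_i),x_{i+1})\leqslant d^*\big((f\times g)(x_i,y_i),(x_{i+1},y_{i+1})\big)\quad\text{and}\quad d_2(g(y_i),y_{i+1})\leqslant d^*\big((f\times g)(x_i,y_i),(x_{i+1},y_{i+1})\big)
\]
for every $i$, taking Cesàro averages and then $\limsup$ shows that $\{x_i\}_{i\geqslant 0}$ is an almost $\delta_1$-average-pseudo-orbit of $f$ and $\{y_i\}_{i\geqslant 0}$ is an almost $\delta_2$-average-pseudo-orbit of $g$. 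Hence, by the choice of $\delta_1$ and $\delta_2$, there are $u\in X$ and $v\in Y$ with $\limsup_{n\rightarrow\infty}\frac1n\sum_{i=0}^{n-1}d_1(f^i(u),x_i)<\epsilon/2$ and $\limsup_{n\rightarrow\infty}\frac1n\sum_{i=0}^{n-1}d_2(g^i(v),y_i)<\epsilon/2$.

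Finally I would check that $(u,v)$ shadows $\{(x_i,y_i)\}_{i\geqslant 0}$ in average: since $(f\times g)^i(u,v)=(f^i(u),g^i(v))$ and $\max\{a,b\}\leqslant a+b$ for $a,b\geqslant 0$,
\begin{align*}
\limsup_{n\rightarrow\infty}\frac1n\sum_{i=0}^{n-1}d^*\big((f\times g)^i(u,v),(x_i,y_i)\big)&\leqslant\limsup_{n\rightarrow\infty}\frac1n\sum_{i=0}^{n-1}d_1(f^i(u),x_i)\\
&\quad+\limsup_{n\rightarrow\infty}\frac1n\sum_{i=0}^{n-1}d_2(g^i(v),y_i)<\epsilon,
\end{align*}
which is precisely the ALASP for $f\times g$.

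I do not expect a genuine obstacle here; the argument is largely bookkeeping. The points that need a little care are: choosing a product metric comparable to the two coordinate metrics so that projections of pseudo-orbits remain pseudo-orbits and the reassembled orbit's error is controlled; splitting the target error as $\epsilon/2+\epsilon/2$ so that the two coordinate shadowing errors sum to less than $\epsilon$; and the use of subadditivity of $\limsup$ in the last display, where the hypothesis that $(X,f)$ and $(Y,g)$ are bounded is convenient since it keeps all the averages (hence their $\limsup$'s) finite, so no $\infty-\infty$ ambiguity arises.
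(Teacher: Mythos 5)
Your proof is correct, but it takes a genuinely different and simpler route than the paper's. You split the target error as $\epsilon/2+\epsilon/2$ and conclude directly from $\max\{a,b\}\leqslant a+b$ together with subadditivity of $\limsup$ applied to the Ces\`aro averages. The paper instead requests coordinate shadowing with the much smaller error $\eta^2/(2D+1)^2$, where $D=\mathrm{diam}(X\times Y)$ and $0<\eta<\epsilon$, and then runs a Chebyshev-type upper-density argument: it bounds the upper densities of the sets $A$ and $B$ of indices at which each coordinate error is at least $\eta/(2D+1)$, notes that the set $C$ of bad indices for the product satisfies $u_d(C)\leqslant u_d(A)+u_d(B)$, and estimates the average of $d^*$ by splitting the sum over good indices (error at most $\eta/(2D+1)$) and bad indices (error at most $D$). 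Your argument buys brevity and in fact never uses the boundedness hypothesis: all the summands are nonnegative, so no $\infty-\infty$ ambiguity can arise in the $\limsup$ subadditivity, and both coordinate $\limsup$'s are finite by construction, whereas the paper's density estimate genuinely requires $D<\infty$. The density technique is the more flexible tool when coordinate errors cannot simply be added, but for the max (or sum) metric on a finite product your direct estimate suffices.
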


\begin{proof}
Consider the metric $d^*((x_1,y_1),(x_2,y_2))=\max\{d_1(x_1,x_2), d_2(y_1,y_2)\}$, $x_1$, $x_2\in X$, $y_1$, $y_2\in Y$ on $X\times Y$. Let $\epsilon>0$ and $D=\mbox{diam}(X\times Y)$ which is finite, $X$, $Y$ being bounded. Choose $0<\eta<\epsilon$. Suppose $\delta_1>0$, $\delta_2>0$ are obtained for $\eta^2/{(2D+1)}^2$ by the ALASP of $f$, $g$ respectively. Let $\delta=\min\{\delta_1,\delta_2\}$ and $\{(x_i,y_i)\}_{i\geqslant 0}$ be an almost $\delta$-average-pseudo-orbit of $f\times g$. Then

\begin{center}
$\limsup\limits_{n\rightarrow\infty}\displaystyle\frac{1}{n}\sum\limits_{i=0}^{n-1}d^*((f\times g)(x_i,y_i),(x_{i+1},y_{i+1}))<\delta$.
\end{center}
This gives
\begin{center}
$\limsup\limits_{n\rightarrow\infty}\displaystyle\frac{1}{n}\sum\limits_{i=0}^{n-1}d_1(f(x_i),x_{i+1})<\delta\hspace{4mm}$ and
$\hspace{4mm}\limsup\limits_{n\rightarrow\infty}\displaystyle\frac{1}{n}\sum\limits_{i=0}^{n-1}d_2(g(y_i),y_{i+1})<\delta$.
\end{center}
Therefore there exist $x\in X$ and $y\in Y$ such that
\begin{center}
$\limsup\limits_{n\rightarrow\infty}\displaystyle\frac{1}{n}\sum\limits_{i=0}^{n-1}d_1(f^i(x),x_i)<\eta^2/(2D+1)^2$
\end{center}
and
\begin{center}
$\limsup\limits_{n\rightarrow\infty}\displaystyle\frac{1}{n}\sum\limits_{i=0}^{n-1}d_2(g^i(y),y_i)<\eta^2/(2D+1)^2$.
\end{center}
Define the sets $A=\{i\in\mathbb{Z_+}:d_1(f^i(x),x_i)\geqslant\eta/(2D+1)\}$, $B=\{i\in\mathbb{Z_+}:d_2(g^i(y),y_i)\geqslant\eta/(2D+1)\}$ and $C=\{i\in\mathbb{Z_+}:d^*((f\times g)^i(x,y),(x_i,y_i))\geqslant\eta/(2D+1)\}$. Then
\begin{center}
$\displaystyle\frac{\eta^2}{(2D+1)^2}>\limsup\limits_{n\rightarrow\infty}\frac{1}{n}\sum\limits_{i=0}^{n-1}d_1(f^i(x),x_i)\geqslant u_d(A)\frac{\eta}{2D+1}$.
\end{center}
This gives $u_d(A)\leqslant\eta/(2D+1)$. Similarly, $u_d(B)\leqslant\eta/(2D+1)$. Since $u_d(C)\leqslant u_d(A)+u_d(B)$, $u_d(C)\leqslant 2\eta/(2D+1)$. For $n\in\mathbb{N}$,

\begin{center}
$\displaystyle\frac{1}{n}\sum\limits_{i=0}^{n-1}d^*((f\times g)^i(x,y),(x_{i},y_{i}))\leqslant\frac{1}{n}[(n-|\{0,1,\dots,n-1\}\cap C|)\eta/(2D+1)]$
\end{center}
\begin{center}
$+\displaystyle\frac{1}{n}[|\{0,1,\dots,n-1\}\cap C|]D=\frac{\eta}{2D+1}+\big(D-\frac{\eta}{2D+1}\big)\displaystyle\frac{1}{n}(|\{0,1,\dots,n-1\}\cap C|)$.
\end{center}
This in turn gives
\begin{eqnarray*}
\limsup\limits_{n\rightarrow\infty}\frac{1}{n}\sum\limits_{i=0}^{n-1}d^*((f\times g)^i(x,y),(x_{i},y_{i}))&\leqslant&\frac{\eta}{2D+1}+\big(D-\frac{\eta}{2D+1}\big)u_d(C) \\&\leqslant&\frac{(2D+1)}{(2D+1)}\eta=\eta<\epsilon.
\end{eqnarray*}
Thus $f\times g$ has the ALASP.
\end{proof}

We have the following result using \cite[Theorem 3.2]{Y}.
\begin{thm}\label{T1}
Let $(X,f)$ be a compact dynamical system. If $f$ has the ALASP and the minimal points of $f$ are dense in $X$, then $f$ is totally strongly ergodic.
\end{thm}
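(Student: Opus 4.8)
The plan is to show, for an arbitrary $k\in\mathbb{N}$ and an arbitrary pair of nonempty open sets $U,V\subseteq X$, that the set $N_{f^k}(U,V)=\{n\in\mathbb{Z}_+:f^{kn}(U)\cap V\neq\emptyset\}$ is syndetic. By Theorem~\ref{T3}, $f^k$ has the ALASP, so it suffices to prove the single statement: if $g$ is a compact dynamical system with the ALASP and with dense minimal points, then $g$ is strongly ergodic; applying this to $g=f^k$ for every $k$ then gives total strong ergodicity. So I would reduce immediately to proving strong ergodicity of $g$ under these hypotheses.

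The key tool is the cited \cite[Theorem 3.2]{Y}, which (in the standard formulation of such results) characterizes the syndeticity of $N_g(U,V)$, or more precisely gives: a compact system with a property like the ALASP is strongly ergodic provided it has a dense set of minimal points, or provides a criterion such as ``every point is chain recurrent via average-pseudo-orbits and minimal points are dense implies syndetic return times.'' The structure of the argument I would run is: pick $x$ a minimal point whose orbit closure meets $U$ (possible since minimal points are dense, so some minimal point lies in $U$), and pick $y\in V$. Since $x$ is minimal, $N_g(x,W)$ is syndetic for every neighborhood $W$ of $x$. The ALASP lets me convert an almost $\delta$-average-pseudo-orbit that travels from near $x$ to near $y$ and back into an actual orbit that shadows it in average; the positive-upper-density set of good indices then must intersect the syndetic return structure of the minimal point, forcing $N_g(U,V)$ to have bounded gaps. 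The precise bookkeeping is exactly what \cite[Theorem 3.2]{Y} packages, so I would invoke it directly rather than reprove it.

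Concretely, I would first fix $\epsilon>0$ small enough that the $\epsilon$-ball around $x$ lies in $U$ and the $\epsilon$-ball around $y$ lies in $V$, obtain $\delta>0$ from the ALASP of $g$ for this $\epsilon$, and then build an explicit almost $\delta$-average-pseudo-orbit: follow the true orbit of $x$ for a long block (so that, by minimality and the density of minimal points, it returns near $x$ syndetically), insert a single jump toward $y$, follow the true orbit of $y$, and repeat this pattern with geometrically growing block lengths so that the density of jump-indices tends to $0$ and the $\limsup$ of the average error is below $\delta$. A point $z$ that $\epsilon$-shadows this orbit in average then has $g^i(z)\in U$ for a positive-upper-density set of $i$ and $g^i(z)\in V$ for a positive-upper-density set of $i$ arranged along the pattern; combined with minimality of the base point, this yields that $N_g(U,V)$ contains a syndetic set. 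The main obstacle I anticipate is making the density/syndeticity bookkeeping airtight: one must choose the block lengths and the threshold so that the ALASP-shadowing point cannot ``avoid'' $U$ or $V$ on a set large enough to destroy syndeticity, and this is precisely the delicate point that \cite[Theorem 3.2]{Y} is designed to handle — so the honest approach is to state the hypotheses in the form that theorem requires and cite it, with the verification that the ALASP supplies those hypotheses being the only thing left to check.
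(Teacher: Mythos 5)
Your proposal lands where the paper does: the paper offers no argument beyond the observation (already recorded after the definition) that the ALASP implies the ASP, followed by a direct appeal to \cite[Theorem 3.2]{Y}, which yields total strong ergodicity from the ASP together with dense minimal points on a compact space. Your preliminary reduction to powers of $f$ via Theorem \ref{T3} is unnecessary for this route (the cited theorem already delivers \emph{total} strong ergodicity, and the reduction would additionally require the standard but unstated fact that the minimal points of $f$ and of $f^k$ coincide), and your sketched direct construction leaves the crucial syndeticity bookkeeping unresolved --- but since you ultimately defer exactly that step to the same citation, the proposal is essentially the paper's proof.
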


Following example justifies that the shadowing property need not imply the ALASP.

\begin{ex}
Consider the Cantor set $\Sigma_2=\{0,1\}^{\mathbb{N}}$ with the metric $d(x,y)=\inf\{2^{-k}:x_i=y_i\mbox{ for all }i<k\}$ for every $x=(x_1,x_2,\dots)$, $y=(y_1,y_2,\dots)\in\Sigma_2$. Then the identity map $I_{\Sigma_2}$ on $\Sigma_2$ has the shadowing property \cite{AH}. However, every point of $\Sigma_2$ is minimal point of $I_{\Sigma_2}$ which implies that $I_{\Sigma_2}$ does not have the ALASP, by Theorem \ref{T1}.
\end{ex}

The following result can be obtained using \cite[Lemma 3.1]{K1}. However, we have the following direct proof also.

\begin{thm}\label{T2}
Let $(X,f)$ be a compact dynamical system and $f$ be surjective. If $f$ has the ALASP, then $f$ is chain transitive. In particular, $CR(f)=X$.
\end{thm}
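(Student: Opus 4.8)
The plan is to show that for every $\delta>0$ and every pair $x,y\in X$ there is a $\delta$-chain of $f$ from $x$ to $y$. The idea is to build an \emph{almost $\delta'$-average-pseudo-orbit} whose average error in fact tends to $0$ (so it qualifies for any $\delta'$), by splicing longer and longer true orbit segments together with single ``bad'' jumps; then to apply the ALASP to this pseudo-orbit, obtain a shadowing point $z$, and finally to extract an honest $\delta$-chain from the fact that the orbit of $z$ is, on a set of full upper density, within $\epsilon$ of the pseudo-orbit. The role of compactness and surjectivity is to guarantee that true orbit segments exist running into any prescribed point (surjectivity gives preimages) and that uniform continuity lets us turn closeness of points into closeness of their short forward iterates.

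First I would fix $\delta>0$, pick $\epsilon>0$ small (to be determined, roughly $\epsilon<\delta/3$ together with a uniform-continuity threshold), and let $\delta_0>0$ be the constant furnished by the ALASP for this $\epsilon$. Next I construct the pseudo-orbit. Choose an increasing sequence of block lengths $L_1<L_2<\cdots$. Using surjectivity, for each $k$ pick a point $p_k\in X$ with $f^{L_k}(p_k)=y$ (iterate surjectivity $L_k$ times), and similarly a point $q_k$ with $f^{L_k}(q_k)=x$. Then form the sequence
\begin{center}
$x,\ f(x),\ \dots,\ (\text{a long true orbit segment ending at }y),\ (\text{a long true orbit segment from }x\text{ to }y),\ \dots$
\end{center}
concatenating blocks of the form $p_k,f(p_k),\dots,f^{L_k-1}(p_k)$ followed by $y$, alternating appropriately with blocks running from $x$ to $y$; the only places where $d(f(\cdot),\cdot)$ can exceed nothing are the finitely-spaced ``seams'' between consecutive blocks, and since each error there is at most $\operatorname{diam}(X)$ while the blocks grow without bound, a counting estimate (as in Example~\ref{ex2}) shows $\limsup_n \frac1n\sum_{i=0}^{n-1} d(f(x_i),x_{i+1})=0$. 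Hence $\{x_i\}$ is an almost $\delta_0$-average-pseudo-orbit of $f$, so there is $z\in X$ with $\limsup_n \frac1n\sum_{i=0}^{n-1} d(f^i(z),x_i)<\epsilon$.

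Now I would harvest the chain. Let $E=\{i\in\mathbb{Z}_+: d(f^i(z),x_i)\ge \epsilon\}$. The shadowing estimate forces $u_d(E)<1$, in fact I can choose the construction and $\epsilon$ so that $E$ has upper density bounded by a small constant; consequently there exist arbitrarily large indices $i$ lying in a long block, with both $i$ and (say) the index of the $y$-endpoint of that block lying outside $E$. Pick such a block running from $x$ to $y$, with endpoints at indices $a$ (value $x_a$, which I have arranged to equal $x$) and $b$ (value $x_b=y$), with $a,b\notin E$ and, by uniform continuity, with $d(f^j(z),x_j)$ small enough for all $a\le j\le b$ that the finite sequence $f^a(z),f^{a+1}(z),\dots,f^b(z)$ is $\delta$-close in the chain sense to the true orbit $x_a,\dots,x_b$; since $x_a=x$ and $x_b=y$ are genuine endpoints, prepending $x$ and appending $y$ gives a $\delta$-chain of $f$ from $x$ to $y$. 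Finally $CR(f)=X$ follows by taking $x=y$. The main obstacle I anticipate is the bookkeeping in the second paragraph: one must be careful that the full-density set on which shadowing is good actually meets the interiors \emph{and} both endpoints of infinitely many long blocks simultaneously, and that the endpoint values are exactly $x$ and $y$ (not merely near them) so that no extra error is incurred at the ends of the chain; choosing the block-length sequence to grow fast and only putting ``$x$-to-$y$'' blocks at a density-$1$ subset of positions handles this, but it is the delicate point.
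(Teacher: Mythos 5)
Your overall strategy---splice true orbit segments into an almost average pseudo-orbit whose average error tends to zero, shadow it via the ALASP, and cut an honest $\delta$-chain out of the orbit of the shadowing point $z$---is exactly the paper's. But your harvesting step has a genuine gap, and it is precisely the point you flag as delicate. You need an index $a$ with $x_a=x$ \emph{exactly} and a later index $b$ with $x_b=y$ \emph{exactly}, both lying outside the bad set $E=\{i:d(f^i(z),x_i)\geqslant\epsilon\}$. With block lengths $L_k\to\infty$, the set of indices carrying the exact value $x$ (one per block) has density zero, and no upper-density bound on $E$, however small, can prevent $E$ from containing a prescribed density-zero set; so ``there exist arbitrarily large indices with both endpoints outside $E$'' does not follow from the shadowing estimate. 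Switching to constant block length does not rescue this either: the block length $N_0$ must satisfy $D/N_0<\zeta$ where $\zeta$ is the ALASP modulus for your chosen $\epsilon$, so $N_0$ is not under your control, and you cannot arrange $u_d(E)<1/(2N_0)$. A secondary issue: a true orbit block literally ``running from $x$ to $y$'' generally does not exist (it would force $y=f^{b-a}(x)$); what you can build with surjectivity is a forward segment out of $x$ and a separate backward segment into $y$, with a seam between them.

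The paper's fix is to drop the insistence on exact endpoints. It alternates blocks of fixed length $N_0$: the forward orbit $x,f(x),\dots,f^{N_0-1}(x)$ and the backward orbit $y_{-(N_0-1)},\dots,y_{-1},y$. Since the $x$-blocks occupy half of all indices, the average estimate $\limsup_n\frac{1}{n}\sum_{i=0}^{n-1}d(f^i(z),w_i)<\eta/2$ forces infinitely many $x$-blocks to contain \emph{some} index $i_k$ with $d(f^{i_k}(z),w_{i_k})<\eta$, where $w_{i_k}=f^{k_1}(x)$ for some $0\leqslant k_1<N_0$, and likewise for the $y$-blocks. Because every point of an $x$-block is reachable from $x$ by a true orbit, and every point of a $y$-block reaches $y$ by a true orbit, the chain $x,f(x),\dots,f^{k_1}(x),f^{i_{k_0}+1}(z),\dots,f^{j_{m_0}-1}(z),y_{-m_1},\dots,y$ works, with uniform continuity invoked only at the single junction where one jumps onto the orbit of $z$. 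Your extraction should be restructured along these lines; the construction of the pseudo-orbit and the appeal to the ALASP in your first two paragraphs are otherwise sound.
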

\begin{proof}
Let $\delta>0$ and $x$, $y\in X$. Since $f$ is uniformly continuous, for this $\delta$, there exists $\eta$, $0<\eta<\delta$, such that for all $a$, $b\in X$ with $d(a,b)<\eta$ we have $d(f(a),f(b))<\delta$. By the ALASP of $f$, for this $\eta$, there exists $\zeta>0$ such that every almost $\zeta$-average-pseudo-orbit of $f$ is $\eta/2$-shadowed in average by some point in $X$. Choose $N_0\in\mathbb{N}$ such that $\frac{D}{N_0}<\zeta$, where $D=\mbox{diam}(X)$. Since $f$ is surjective, let $y_{-j}\in f^{-1}(y_{-j+1})$, $1\leqslant j\leqslant N_0-1$, where $y_0=y$. For $k\in\mathbb{Z}_+$, define

\begin{center}
$w_{2kN_0+j}=f^j(x)$, $0\leqslant j\leqslant N_0-1$,
\end{center}
\begin{center}
$w_{(2k+1)N_0+j}=y_{-(N_0-1)+j}$, $0\leqslant j\leqslant N_0-1$.
\end{center}
So $\{w_i\}_{i\geqslant 0}\equiv\{x,f(x),\dots,f^{N_0-1}(x),y_{-(N_0-1)},y_{-(N_0-2)},\dots,y_{-1},y,x,f(x),\dots,\\f^{N_0-1}(x),y_{-(N_0-1)},y_{-{(N_0-2)}},\dots,y,\dots\}$.

\vspace{2mm}
Now for $mN_0\leqslant n<(m+1)N_0$, $m\in\mathbb{N}$,
\begin{center}
$\displaystyle\frac{1}{n}\sum\limits_{i=0}^{n-1}d(f(w_i),w_{i+1})\leqslant \frac{mD}{mN_0}=\frac{D}{N_0}$.
\end{center}
This gives
\begin{center}
$\limsup\limits_{n\rightarrow\infty}\displaystyle\frac{1}{n}\sum\limits_{i=0}^{n-1}d(f(w_i),w_{i+1})\leqslant\frac{D}{N_0}<\zeta$.
\end{center}
Therefore $\{w_i\}_{i\geqslant 0}$ is an almost $\zeta$-average-pseudo-orbit of $f$ so that there exists $z\in X$ such that
\begin{equation}\label{3}
\limsup\limits_{n\rightarrow\infty}\displaystyle\frac{1}{n}\sum\limits_{i=0}^{n-1}d(f^i(z),w_i)<\eta/2.
\end{equation}
Note that there are infinitely many $k\in\mathbb{Z}_+$ for which there exist $i_k\in\{2kN_0,\\ 2kN_0+1,\dots,(2k+1)N_0-1\}$, that is, $w_{i_k}\in\{x,f(x),\dots,f^{N_0-1}(x)\}$ such that\\ $d(f^{i_k}(z),w_{i_k})<\eta$ otherwise it will lead to
\begin{center}
$\limsup\limits_{n\rightarrow\infty}\displaystyle\frac{1}{n}\sum\limits_{i=0}^{n-1}d(f^i(z),w_i)\geqslant\eta/2$,
\end{center}
which contradicts (\ref{3}). Similarly, an analogous statement holds with $2k$ replaced by $2k+1$ and $f^j(x)$ replaced by $y_{-(N_0-1)+j}$ for $0\leqslant j\leqslant N_0-1$. Therefore we can choose two positive integers $k_0$, $m_0$ such that $i_{k_0}<j_{m_0}$, $w_{i_{k_0}}=f^{k_1}(x)$ for some $0<k_1<N_0$ satisfying $d(f^{i_{k_0}}(z),w_{i_{k_0}})<\eta$ and $w_{j_{m_0}}=y_{-m_1}$ for some $0<m_1<N_0$ satisfying $d(f^{j_{m_0}}(z),w_{j_{m_0}})<\eta$.\\ Thus the required $\delta$-chain of $f$ from $x$ to $y$ is as follows:\\
$\{x,f(x),\dots,f^{k_1-1}(x),f^{k_1}(x)=w_{i_{k_0}},f^{i_{k_0}+1}(z),f^{i_{k_0}+2}(z),\dots,f^{j_{m_0}-1}(z),w_{j_{m_0}}=y_{-m_1},y_{-m_1+1},\dots,y_{-1},y\}$. Hence $f$ is chain transitive.
\end{proof}

We recall that if $(X,f)$ is a compact dynamical system, then $f$ is totally chain transitive iff $f$ is chain mixing \cite{R}.

\begin{thm}\label{T4}
Let $(X,f)$ be a compact dynamical system and $f$ be surjective. If $f$ has the ALASP, then $f$ is chain mixing.
\end{thm}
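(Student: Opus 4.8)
The plan is to deduce this from the results already in hand rather than to argue from scratch: I will route through \emph{total} chain transitivity, using Theorem \ref{T3} to pass the ALASP to all iterates, Theorem \ref{T2} to get chain transitivity of each iterate, and finally the characterization recalled just above (on a compact system, total chain transitivity is equivalent to chain mixing, \cite{R}).

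First I would note that the hypotheses are inherited by powers: for every $k\in\mathbb{N}$ the pair $(X,f^k)$ is again a compact dynamical system, and $f^k$ is surjective because a composition of surjections is surjective. By Theorem \ref{T3}, since $f$ has the ALASP, so does $f^k$ for every $k>1$ (and trivially for $k=1$). Thus each $(X,f^k)$ is a compact dynamical system with $f^k$ surjective and having the ALASP.

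Next I would apply Theorem \ref{T2} to $f^k$ for each $k\in\mathbb{N}$, concluding that $f^k$ is chain transitive; equivalently, $f$ is totally chain transitive. Invoking the recalled equivalence from \cite{R} — for a compact dynamical system, total chain transitivity coincides with chain mixing — we obtain that $f$ is chain mixing, which is the assertion.

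There is essentially no obstacle here beyond the bookkeeping observation that surjectivity and compactness are preserved under taking iterates, so that Theorems \ref{T3} and \ref{T2} apply to every $f^k$; after that the cited results chain together at once. (If a self-contained argument were preferred, one could instead imitate the proof of Theorem \ref{T2}, building for each sufficiently large prescribed length $n$ an almost $\zeta$-average-pseudo-orbit of $f$ made of alternating forward-orbit segments from $x$ and backward-orbit segments into $y$, padded so as to force a $\delta$-chain from $x$ to $y$ of length exactly $n$; but passing through total chain transitivity is cleaner and shorter.)
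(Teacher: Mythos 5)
Your proposal is correct and follows exactly the paper's own route: Theorem \ref{T3} to transfer the ALASP to every iterate, Theorem \ref{T2} applied to each $f^k$ to get total chain transitivity, and the recalled equivalence from \cite{R} between total chain transitivity and chain mixing on compact systems. The paper states this in one line; your version merely adds the (correct) bookkeeping that compactness and surjectivity pass to iterates.
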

\begin{proof}
The proof follows from Theorem \ref{T3}, Theorem \ref{T2} and the fact that total chain transitivity implies chain mixing.
\end{proof}

\begin{rmk}
Example \ref{ex2} justifies that average chain mixing need not imply the ALASP. Also, note that identity map on a compact connected metric space is chain mixing but does not have the ALASP, by Theorem \ref{T1}.
\end{rmk}

Following result shows that a map having the ALASP on a compact metric space has only one chain component, namely, its chain recurrent set.

\begin{thm}
Let $(X,f)$ be a compact dynamical system. If $f$ has the ALASP, then $CR(f)$ consists of a single chain component.
\end{thm}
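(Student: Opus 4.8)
The plan is to prove that for every $\delta>0$ and every pair $x,y\in CR(f)$ there is a $\delta$-chain of $f$ from $x$ to $y$. Applying this with the roles of $x$ and $y$ exchanged shows that any two points of $CR(f)$ are chain equivalent, and since $CR(f)$ is nonempty (it contains every $\omega$-limit set), this is precisely the assertion that $CR(f)$ is a single chain component. Note also that a $\delta$-chain in $X$ — whose entries need not lie in $CR(f)$ — is what the definition of chain component calls for, so no further localization is required.

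The argument is a variant of the proof of Theorem~\ref{T2}. In that proof the surjectivity of $f$ was used at exactly one place: to produce a genuine backward orbit segment $y_{-(N_0-1)},\dots,y_{-1},y_0=y$ ending at $y$. The rest of that proof — choosing $\eta\in(0,\delta)$ with $d(a,b)<\eta\Rightarrow d(f(a),f(b))<\delta$ by uniform continuity; extracting $\zeta>0$ from the ALASP so that every almost $\zeta$-average-pseudo-orbit is $\eta/2$-shadowed in average; fixing $N_0>D/\zeta$ with $D=\operatorname{diam}(X)$; forming the sequence $\{w_i\}$ alternating the forward block $x,f(x),\dots,f^{N_0-1}(x)$ with the backward block $y_{-(N_0-1)},\dots,y_{-1},y$; verifying $\limsup_{n}\frac1n\sum_{i=0}^{n-1}d(f(w_i),w_{i+1})\leqslant D/N_0<\zeta$ because errors occur only at block junctions; taking an averaging shadowing point $z\in X$; and observing that the orbit of $z$ must come $\eta$-close to each block infinitely often — uses no surjectivity of $f$ on $X$.

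So the only point to supply under the weaker hypothesis is the backward block. For this I would invoke the standard fact (see \cite{AH}) that $f|_{CR(f)}\colon CR(f)\to CR(f)$ is surjective; equivalently, $CR(f)\subseteq\bigcap_{N\geqslant 0}f^N(X)$. Thus for $y\in CR(f)$ and $N_0\in\mathbb{N}$ there is $y_{-(N_0-1)}\in CR(f)$ with $f^{N_0-1}(y_{-(N_0-1)})=y$, and $y_{-j}:=f^{N_0-1-j}(y_{-(N_0-1)})$, $0\leqslant j\leqslant N_0-1$, is a genuine orbit segment of $f$ in $X$ ending at $y$. Running the proof of Theorem~\ref{T2} with this block yields a visited forward-block index $i_{k_0}$ with $w_{i_{k_0}}=f^{k_1}(x)$ and $d(f^{i_{k_0}}(z),w_{i_{k_0}})<\eta$, and a later visited backward-block index $j_{m_0}$ with $w_{j_{m_0}}=y_{-m_1}$ and $d(f^{j_{m_0}}(z),w_{j_{m_0}})<\eta$, and then
\[
x,f(x),\dots,f^{k_1}(x)=w_{i_{k_0}},\ f^{i_{k_0}+1}(z),\dots,f^{j_{m_0}-1}(z),\ y_{-m_1},y_{-m_1+1},\dots,y_{-1},y
\]
is a $\delta$-chain of $f$ from $x$ to $y$: its three orbit pieces contribute no error, the junction $f^{k_1}(x)\mapsto f^{i_{k_0}+1}(z)$ has error $d(f(w_{i_{k_0}}),f(f^{i_{k_0}}(z)))<\delta$ by the choice of $\eta$, and the junction $f^{j_{m_0}-1}(z)\mapsto y_{-m_1}=w_{j_{m_0}}$ has error $d(f^{j_{m_0}}(z),w_{j_{m_0}})<\eta<\delta$.

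The one genuinely new ingredient, and the step to get right, is this reduction to the surjective case: that the backward orbit block ending at $y$ can be located using surjectivity of $f|_{CR(f)}$ even when $f$ itself is not surjective on $X$. Granting that, the proof is word for word the one already carried out for Theorem~\ref{T2}, and the remaining verification is routine bookkeeping.
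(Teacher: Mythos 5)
Your proof is correct, and its engine is exactly the paper's: rerun the argument of Theorem \ref{T2}, observing that surjectivity of $f$ on $X$ was used there only to manufacture a genuine backward orbit segment ending at the target point, and supply that segment by other means. The only real divergence is where the backward block comes from and how the conclusion is organized. You argue directly that every pair $x,y\in CR(f)$ is joined by $\delta$-chains, which forces you to invoke the external (true, but not entirely trivial) fact that $f(CR(f))=CR(f)$ for compact $X$, so that an arbitrary $y\in CR(f)$ has preimages of all orders. The paper instead argues by contradiction with two distinct chain components $\Omega_1,\Omega_2$: these are closed and forward invariant, so by compactness the nested intersections $\bigcap_{n\geqslant 0}f^n(\Omega_i)$ are nonempty, and it suffices to chain-connect one point chosen from each such intersection --- points that come equipped with preimages of all orders by construction, at the cost of only an elementary nested-compacta argument rather than a cited lemma. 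Your route buys a cleaner, direct statement (all of $CR(f)$ is one chain class, no contradiction needed) at the price of importing the surjectivity of $f|_{CR(f)}$ from \cite{AH}; the paper's stays self-contained. Both are sound, and the shadowing core --- the alternating-block almost average pseudo-orbit, the tracing point $z$, and the two $\eta$-close visits spliced into a $\delta$-chain --- is identical in the two arguments.
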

\begin{proof}
Suppose that $\Omega_1$ and $\Omega_2$ are two distinct chain components of $f$ in $CR(f)$. Since $X$ is compact, $\Omega_1$, $\Omega_2$ are closed and $f(\Omega_1)\subseteq\Omega_1$, $f(\Omega_2)\subseteq\Omega_2$ which implies $\cap_{n\geqslant 0}f^n(\Omega_1)\ne\emptyset$ and $\cap_{n\geqslant 0}f^n(\Omega_2)\ne\emptyset$. Let $x\in\cap_{n\geqslant 0}f^n(\Omega_1)$ and $y\in\cap_{n\geqslant 0}f^n(\Omega_2)$. Then $x\in\Omega_1$ such that $x=f^i(w_i)$ for every $i>0$, for some $w_i\in\Omega_1$ and $y\in\Omega_2$ such that $y=f^j(z_j)$ for every $j>0$, for some $z_j\in\Omega_2$. Let $\delta>0$ be given. Choose $\zeta$ and $N_0$ as in Theorem \ref{T2}. Consider almost $\zeta$-average-pseudo-orbits $\{x,f(x),\dots,f^{N_0-1}(x),z_{N_0-1},f(z_{N_0-1}),\dots,f^{N_0-2}(z_{N_0-1}),y,x,f(x),\dots,f^{N_0-1}(x),\\\dots\}$ and $\{y,f(y),\dots,f^{N_0-1}(y),w_{N_0-1},f(w_{N_0-1}),\dots,f^{N_0-2}(w_{N_0-1}),x,y,f(y),\dots,\\f^{N_0-1}(y),\dots\}$. By similar arguments as given in Theorem \ref{T2} one can prove that $x$ and $y$ are chain equivalent. This gives $\Omega_1=\Omega_2$, which is a contradiction. Thus $CR(f)$ consists of a single chain component.
\end{proof}

\vspace{6mm}

\end{document}